\documentclass[11pt,leqno,twoside]{amsart}

\usepackage{fancyvrb}
\usepackage{fancyhdr}
\usepackage{etoolbox}
\usepackage{enumerate}
\usepackage{pgfplots}
\pgfplotsset{compat=1.16, width = 10cm}

\usepackage{amssymb}
\usepackage{placeins}
\usepackage[english]{babel}
\usepackage{amssymb,amsthm,amsmath,eucal,mathrsfs}
\usepackage{bm}

\setlength{\textwidth}{16.3cm}

\setlength{\textheight}{21.6cm}

\hoffset=-55pt

%
%

\usepackage{amsmath}
\usepackage{amsfonts}
\usepackage{float}
\usepackage{amsmath}
\usepackage{amssymb}
\usepackage{graphicx}
\usepackage{lscape}
\usepackage{amstext}
\usepackage{amsthm}
\usepackage{color}
\usepackage{float}
\usepackage{mathrsfs}
\usepackage{epsfig}
\usepackage{url}
\usepackage{fancyhdr}
\usepackage{pspicture}
\usepackage{graphicx}





\usepackage{cite}

\usepackage{amsmath,verbatim}

\usepackage{amsthm}

\usepackage{amssymb}

\usepackage{amsfonts}

\usepackage{dsfont}

\usepackage{hyperref}

\newtheorem{theorem}{Theorem}[section]

\newtheorem{proposition}[theorem]{Proposition}

\newtheorem{Hypotheses}{Hypotheses}

\theoremstyle{definition}

\newtheorem{remark}[theorem]{Remark}

\numberwithin{equation}{section}




























\newcommand\restr[2]{{
  \left.\kern-\nulldelimiterspace 
  #1 
  \vphantom{\big|} 
  \right|_{#2} 
  }}


\usepackage{eucal}

\date{\today}


\title[Logarithmic potential]{Estimation of the eigenvalues and the integral of the eigenfunctions of the Newtonian potential operator}

\author[Alsenafi,, Ghandriche and Sini]{Abdulaziz Alsenafi$^{*}$, Ahcene Ghandriche  $^{**}$ and Mourad Sini$^{\ddag}$}
\thanks{$^*$ Department of Mathematics, Faculty of Science, Kuwait University, P.O. Box 5969, Safat 13060, Kuwait. Email: abdulaziz.alsenafi@ku.edu.kw}
\thanks{$^*$ Nanjing Center for Applied Mathematics, Nanjing 211135, People's Republic of China. Email: gh.hsen@njcam.org.cn.}
\thanks{$^{\ddag}$ RICAM, Austrian Academy of Sciences, Altenbergerstrasse 69, A-4040, Linz, Austria. Email: mourad.sini@oeaw.ac.at.} 

\begin{document}

\subjclass[2010]{31B10, 35R30, 35C20}
\keywords{Logarithmic potential operator, asymptotic expansions, Bessel functions, min-max principle, spectral theory.}
\maketitle

\begin{abstract}
We consider the problem of estimating the  eigenvalues and the integral of the corresponding eigenfunctions, associated to the Newtonian potential operator, defined in a bounded domain $\Omega \subset \mathbb{R}^{d}$, where $d=2,3$, in terms of the maximum radius of $\Omega$. 
We first provide these estimations in the particular case of a ball and a disc. Then we extend them to general shapes using a, derived, monotonicity property of the eigenvalues of the Newtonian operator. The derivation of the lower bounds are quite tedious for the 2D-Logarithmic potential operator. Such upper/lower bounds appear naturally while estimating the electric/acoustic fields propagating in $\mathbb{R}^{d}$ in the presence of small scaled and highly heterogeneous particles.
\end{abstract}

\bigskip

\section{Introduction and statement of the results}
\subsection{Introduction}\label{SubsectionI}
The Newtonian potential integral operator, in $\mathbb{R}^{d}$, $d=2,3$, is of considerable interest in both scattering and potential theory, without being exhaustive we refer the readers to \cite{cartan1945theorie, ammari2019subwavelength, alsenafi2022foldy, ghandriche2022mathematical, colton2019inverse, ammari2018super}. 

Let $\Omega$ be a bounded and Lipschitz-regular domain of $\mathbb{R}^{d}$, $d=2,3$. The Newtonian operator correspond to any function $f$ the potential
\begin{equation}\label{defN}
u(x) = N_{\Omega}\left( f \right)(x) := \int_{\Omega} \phi_{0}(x,y) \, f(y) \, dy, \quad x \in \Omega,
\end{equation}
 where 
\begin{equation}\label{FundamentalSolution}
\phi_{0}(x,y) := \left\{
\begin{array}{rll}
  \dfrac{-1}{2 \, \pi} \log\left\vert x - y \right\vert \quad & \text{in} & \quad \mathbb{R}^{2},\\
 && \\
    \dfrac{1}{4 \, \pi \, \left\vert x-y \right\vert} \quad & \text{in} & \quad \mathbb{R}^{3}, 
    \end{array}
\right.
\end{equation}
is the fundamental solution of the Laplacian operator, i.e., 
\begin{equation*}
- \Delta \phi_{0}(x,y) = \delta(x,y), \quad \text{in} \quad \mathbb{R}^{d} \quad \text{for} \; d=2,3,
\end{equation*}
where $\delta(\cdot,\cdot)$ is the Dirac function. It is well known that $N_{\Omega}(\cdot)$, 
is a linear, compact, self-adjoint non-negative operator on $\mathbb{L}^{2}(\Omega)$ and it carries $\mathbb{L}^{2}(\Omega)$ to $\mathbb{H}^{2}(\Omega)$. For other useful properties  of Newtonian potential operator we refer the readers to \cite{Anderson, kellogg, ruzhansky2016isoperimetric, kalmenov2011boundary}. Therefore, $N_{\Omega}(\cdot)$ has a countable decreasing sequence of eigenvalues, with possible multiplicities, that we denote in the sequel by $\left\{ \lambda_{n}(\Omega) \right\}_{n \geq 0}$, with the corresponding eigenfunctions as a basis of the space $\mathbb{L}^{2}(\Omega)$. Unfortunately, the literature on computing explicitly its eigenvalues and the corresponding eigenfunctions is not that rich and it has focused only on 'simple' domains, e.g. symmetric domains, see \cite{kalmenov2011boundary, Anderson}.  For the particular case of a ball, in $\mathbb{R}^{3}$, of radius $a$, the authors of \cite[Theorem 4.2]{kalmenov2011boundary} have given explicit expressions for the eigenvalues defined by 
\begin{equation}\label{EigValFractionalOrder}
\lambda_{l,j} = \frac{a^{2}}{\left[ \mu_{j}^{(l + \frac{1}{2})} \right]^{2}}, \quad l \geq 0 \quad \text{and} \quad j \geq 1, 
\end{equation}
where $\mu_{j}^{(l + \frac{1}{2})}$ are the roots of the transcendental equation
\begin{equation}\label{muFractionalOrder}
\left(2 \, l + 1 \right) \; \begin{LARGE}
\textbf{J}_{l + \frac{1}{2}}
\end{LARGE}\left( \mu_{j}^{(l + \frac{1}{2})} \right) + \frac{\mu_{j}^{(l + \frac{1}{2})}}{2} \; \left(     \begin{LARGE}
\textbf{J}_{l - \frac{1}{2}}
\end{LARGE}\left( \mu_{j}^{(l + \frac{1}{2})}\right)   -  \begin{LARGE}
\textbf{J}_{l + \frac{3}{2}}
\end{LARGE}\left( \mu_{j}^{(l + \frac{1}{2})} \right) \right) = 0,
\end{equation}
where $\begin{LARGE}
\textbf{J}_{\nu}
\end{LARGE}\left( \cdot \right)$, \, for $\nu \in \mathbb{R}$, refers to the Bessel function of fractional order. We recall that $\begin{LARGE}
\textbf{J}_{\nu}
\end{LARGE}\left( \cdot \right)$, admits the following representation 
\begin{equation}\label{BFFOS}
\begin{LARGE}
\textbf{J}_{\nu}
\end{LARGE}\left( x \right) = \sum_{k = 0}^{\infty} \frac{(-1)^{k}}{\Gamma(k+1) \, \Gamma(k+1+\nu)} \; \frac{x^{2k+\nu}}{2^{2k+\nu}},
\end{equation}
where $\Gamma(\cdot)$ stands for the Gamma function. The eigenfunctions corresponding to each eigenvalue $\lambda_{l,j}$ can be represented, in spherical coordinates, in the form
\begin{equation}\label{EigFctFO}
u_{l,j,m}(r, \phi, \theta) = \begin{LARGE}
\textbf{J}_{l + \frac{1}{2}}
\end{LARGE}\left(\sqrt{\lambda_{l,j}} \, r \right) \mathbb{Y}_{l}^{m}(\phi, \theta),
 \quad \text{with} \;\; \left\vert m \right\vert \leq l, 
\end{equation}
where 
\begin{equation}\label{SphericalHarmonics}
\mathbb{Y}_{l}^{m}(\phi, \theta) = \left\{
\begin{array}{rll}
  \mathbb{P}_{l}^{m}\left( \cos(\theta) \right) \; \cos(m \phi) \quad & \text{for} & \quad m=0,\cdots,l ,\\
 && \\
   \mathbb{P}_{l}^{\left\vert m \right\vert}\left( \cos(\theta) \right) \; \sin(\left\vert m \right\vert \phi) \quad & \text{for} & \quad m=-1,\cdots,-l , 
    \end{array}
\right.
\end{equation}
where $\mathbb{P}_{l}^{m}$ are the associated Legendre polynomials. It is clear, from $(\ref{muFractionalOrder})$, that $\mu_{j}^{(l + \frac{1}{2})} \sim 1$, for $l \geq 0$ and $j \geq 1$. Hence, from $(\ref{EigValFractionalOrder})$, we obtain
\begin{equation}\label{EVNB}
\lambda_{l,j} \, \sim \, a^{2}, \quad \text{for} \; l \geq 0 \; \text{and} \; j \geq 1. 
\end{equation}
For the integral of the eigenfunctions, we have 
\begin{equation*}
\int_{D} u_{l,j,m}(x) \, dx \overset{(\ref{EigFctFO})}{=} \int_{0}^{a} \, \begin{LARGE}
\textbf{J}_{l + \frac{1}{2}}
\end{LARGE}\left(\sqrt{\lambda_{l,j}} \, r \right) \,  r^{2} \, dr \, \int_{0}^{2 \, \pi} \, \int_{0}^{\pi}  \mathbb{Y}_{l}^{m}(\phi, \theta) \, \cos(\theta) \, d\theta \, d\phi.  
\end{equation*}
Observe, from $(\ref{SphericalHarmonics})$, that for $m \neq 0$
\begin{equation*}
\int_{0}^{2 \, \pi} \, \int_{0}^{\pi}  \mathbb{Y}_{l}^{m}(\phi, \theta) \, \cos(\theta) \, d\theta \, d\phi = 0. 
\end{equation*}
Then, by taking $m=0$ we obtain
\begin{equation*}
\int_{D} u_{l,j,0}(x) \, dx = 2 \, \pi \, \int_{0}^{a} \, \begin{LARGE}
\textbf{J}_{l + \frac{1}{2}}
\end{LARGE}\left(\sqrt{\lambda_{l,j}} \, r \right) \,  r^{2} \, dr \,\, \int_{0}^{\pi}  \mathbb{P}_{l}^{0}(\cos(\theta)) \, \cos(\theta) \, d\theta.  
\end{equation*}
We can check that\footnote{This can be proved using the series representation of $\mathbb{P}_{2l}^{0}(\cdot)$, given by
\begin{equation*}
\mathbb{P}_{2l}^{0}(x) = \frac{1}{4^{l}} \sum_{m=0}^{l} (-1)^{m} \frac{(4l-2m)!}{m! \, (2l-m)! \, (2l-2m)!} \, x^{2l-2m}.
\end{equation*}
} for $l$ even, we have
\begin{equation*}
\int_{0}^{\pi}  \mathbb{P}_{l}^{0}(\cos(\theta)) \, \cos(\theta) \, d\theta = 0.
\end{equation*}
Hence, by keeping only the odd index, we end up with
\begin{eqnarray*}
\int_{D} u_{2l+1,j,0}(x) \, dx &=& 2 \, \pi \, \int_{0}^{a} \, \begin{LARGE}
\textbf{J}_{2 l + \frac{3}{2}}
\end{LARGE}\left(\sqrt{\lambda_{2l+1,j}} \, r \right) \,  r^{2} \, dr \,\, \int_{0}^{\pi}  \mathbb{P}_{2l+1}^{0}(\cos(\theta)) \, \cos(\theta) \, d\theta \\ 
&\overset{(\ref{EigValFractionalOrder})}{=}& 2 \, \pi \, \left( \dfrac{\mu_{j}^{(2 l + \frac{3}{2})}}{a} \right)^{3} \, \int_{0}^{\dfrac{a^{2}}{\mu_{j}^{(2 l + \frac{3}{2})}}} \, \begin{LARGE}
\textbf{J}_{2 l + \frac{3}{2}}
\end{LARGE}\left( r \right) \,  r^{2} \, dr \,\, \int_{0}^{\pi}  \mathbb{P}_{2l+1}^{0}(\cos(\theta)) \, \cos(\theta) \, d\theta.  
\end{eqnarray*}
Using the series representation of $\mathbb{P}_{2l+1}^{0}(\cdot)$, we obtain: 
\begin{equation*}
\int_{0}^{\pi}  \mathbb{P}_{2l+1}^{0}(\cos(\theta)) \, \cos(\theta) \, d\theta = \sum_{m=0}^{l} (-1)^{m} \, \frac{\left( 4 l + 2 - 2m  \right)! \, \pi}{2^{2(2l+1-m)} \, (l-m)! \; (l+1-m)!} \; \sim \; 1.
\end{equation*}
Then, knowing that $\mu_{j}^{(2 l + \frac{3}{2})} \, \sim \, 1$, we deduce
\begin{equation}\label{ApprInt}
\int_{D} u_{2l+1,j,0}(x) \, dx  \sim  a^{-3}  \, \int_{0}^{a^{2}} \, \begin{LARGE}
\textbf{J}_{2 l + \frac{3}{2}}
\end{LARGE}\left( r \right) \,  r^{2} \, dr.  
\end{equation}
Now, using $(\ref{BFFOS})$,
\begin{equation*}
\int_{0}^{a^{2}} \, \begin{LARGE}
\textbf{J}_{2 l + \frac{3}{2}}
\end{LARGE}\left( r \right) \,  r^{2} \, dr = a^{9 + 4 l} \sum_{k = 0}^{\infty} \frac{(-1)^{k}}{\Gamma(k+1) \, \Gamma\left(k+2l+\frac{5}{2}\right)} \; \frac{1}{2^{2k+2l+\frac{3}{2}}} \; \frac{a^{4k}}{\frac{9}{2} + 2k + 2l} \; \sim \; a^{9 + 4 l}.
\end{equation*}
Hence, 
\begin{equation}\label{NU}
\int_{D} u_{2l+1,j,0}(x) \, dx  \, \sim \,  a^{6+4l}.  
\end{equation}
For the $\mathbb{L}^{2}(D)$ norm of $u_{2l+1,j,0}(\cdot)$, similarly to $(\ref{ApprInt})$, we can derive 
\begin{equation}\label{DU}
\left\Vert u_{2l+1,j,0} \right\Vert^{2}_{\mathbb{L}^{2}(D)} \; \sim \; a^{-3} \, \, \int_{0}^{a^{2}} \left\vert \begin{LARGE}
\textbf{J}_{2 l + \frac{3}{2}}
\end{LARGE}\left( r \right) \right\vert^{2} \,  r^{2} \, dr \; \overset{(\ref{BFFOS})}{\sim} \; a^{9 + 8 l}.  
\end{equation}
By setting $v_{2l+1,j,0} := \dfrac{u_{2l+1,j,0}}{\left\Vert u_{2l+1,j,0} \right\Vert_{\mathbb{L}^{2}(D)}}$ to be the normalized eigenfunctions and combining $(\ref{NU})$ with $(\ref{DU})$, we end up with the following estimation
\begin{equation}\label{EFNB}
\int_{D} v_{2l+1,j,0}(x) \, dx \; \sim \; a^{\frac{3}{2}}.
\end{equation} 
Finally, in the case of ball of radius $a$, we deduce from  $(\ref{EVNB})$ and $(\ref{EFNB})$, the following behaviour 
\begin{equation}\label{3DEigS}
\lambda_{l,j} \, \sim \, a^{2}  \quad \text{and} \quad \int_{D} v_{2l+1,j,0}(x) \, dx \; \sim \; a^{\frac{3}{2}} \quad \text{for } l \geq 0 \;\, \text{and} \;\, j \geq 1. 
\end{equation}
Deriving analogous formula for $(\ref{3DEigS})$, in the case of two dimension space is more complicated. The reason for this is the additive logarithmic term appearing after scaling the fundamental solution in two dimension. More precisely, from $(\ref{FundamentalSolution})$, we see that 
\begin{equation}\label{KernelScale}
\phi_{0}(x,y) = a^{-1} \, \phi_{0}\left( \tilde{x},\tilde{y} \right), \;\; \text{in 3D}, \;\; \text{and} \;\; \phi_{0}(x,y) = \frac{-1}{2 \, \pi} \log(a) + \phi_{0}\left( \tilde{x},\tilde{y} \right), \;\; \text{in 2D},
\end{equation}
where $x = z + a \, \tilde{x}$ and $y = z + a \, \tilde{y}$. The goal of this work is to analyse in detail, for an arbitrary domain $\Omega$ the scale of the eigenvalues and the integral of their corresponding eigenfunctions for the two dimensional Newtonian potential operator, that is also called in the literature the Logarithmic potential operator. As done previously, we start with the simplest case of a disc. For the case of a disc of radius $a$, we recall from \cite[Theorem 4.1]{kalmenov2011boundary} the explicit expressions of the eigenvalues 
\begin{equation*}
\lambda_{k,j} = a^{2} \, \left( \mu_{j}^{(k)} \right)^{-2}, \qquad k=0,1,2,\cdots \quad \text{and} \quad j=1,2,\cdots 
\end{equation*} 
and the corresponding eigenfunctions are given by
\begin{equation*}
u_{k,j}(r,\phi) = \begin{LARGE}
\textbf{J}_{k}
\end{LARGE}\left(\mu_{j}^{(k)} \, \frac{r}{a}\right) \, e^{i k \phi}, \qquad r \in [0,a] \quad \text{and} \quad \phi \in [0,2\pi],
\end{equation*} 
where $\begin{LARGE}
\textbf{J}_{k}
\end{LARGE}$ is the Bessel function of the first kind of order $k$ and $\mu_{j}^{(k)}$ are the roots of the following transcendental equation
\begin{equation*}
k \, \begin{LARGE}
\textbf{J}_{k}
\end{LARGE}\left( \mu_{j}^{(k)} \right) + \frac{\mu_{j}^{(k)}}{2} \, \left( \begin{LARGE}
\textbf{J}_{k-1}
\end{LARGE}\left( \mu_{j}^{(k)} \right) - \begin{LARGE}
\textbf{J}_{k+1}
\end{LARGE}\left( \mu_{j}^{(k)} \right) \right) = 0, \qquad k=1,2,\cdots,
\end{equation*}
and, for $k=0$, 
\begin{equation*}
\begin{LARGE}
\textbf{J}_{0}
\end{LARGE}
\left( \mu_{j}^{(0)} \right) + 2 \, \log(a) \, \mu_{j}^{(0)} \, \begin{LARGE}
\textbf{J}_{1}
\end{LARGE}
\left( \mu_{j}^{(0)} \right) = 0.
\end{equation*}
\medskip
\newline
Writing such explicit formulas for the eigenvalues and the eigenfunctions for an arbitrary domain $\Omega$ is out of reach. To overcome this difficulty, we propose a two-steps method allowing us to get the scale of the eigenvalues and the integral of eigenfunctions of the Newtonian potential operator defined over an arbitrary domain $\Omega$. First, we estimate the scale of the eigenvalues and the integral of eigenfunctions of Newtonian operator defined over a disc of radius $a$. Afterwards, the idea is to encircle the  domain $\Omega$, from inside and outside, between two discs, that we denote in the sequel by $D_{1}$ and $D_{2}$, with radius of each of them proportional to $a$ and then we make use of the \textit{property domain monotonicity} of the eigenvalues of the Newtonian potential operator that we prove in Section \ref{SectionII}, to derive the scale of the eigenvalues and eigenfunctions of the Newtonian potential operator. To accomplish this, the coming assumption, regarding the shape domain $\Omega$, is needed to derive the \textit{ property of domain monotonicity} for the Newtonian potential operator.
\begin{Hypotheses}\label{Hyp}
The domains $ \Omega $ are taken to be Lipschitz-regular domain of $\mathbb{R}^2$ and satisfy the following property 
\begin{equation}\label{EH1}
D(z_{1};\rho_{1}) := D_{1} \subset \Omega \subset D_{2} := D(z_{2};\rho_{2}),
\end{equation}
where, for $j=1,2$, we have 
\begin{equation}\label{EH2}
z_{j} \in \Omega \;\; \text{and} \;\; \rho_{j} \; \sim \; a,
\end{equation}
and $D_{j}$ is a disc.
\end{Hypotheses}
The previous hypotheses suggest that
\begin{equation*}
\left\vert \Omega \right\vert \;\; \sim \;\; \pi \, \rho_{j}^{2} \;\; \sim \;\; a^{2}.
\end{equation*}

\bigskip
\subsection{Statement of the results}
In the following theorem, we state the scales of the eigenvalues and the integral of eigenfunctions of the Newtonian potential operator, defined by $(\ref{defN})$, for an arbitrary domain $\Omega$ satisfying \textbf{Hypotheses \ref{Hyp}}.

\begin{theorem}\label{PrincipleResult}
Assume that $\Omega \subset \mathbb{R}^{2}$ satisfies \textbf{Hypotheses \ref{Hyp}}. Let\footnote{To write short formula we omit to mark the dependency of the eigenfunctions with respect to the domain $\Omega$.} $\left( \lambda_{n}(\Omega); e_{n} \right)_{n \geq 0}$ be the eigen-system associated to the 2D-Newtonian potential operator $N_{\Omega}(\cdot)$, defined by $(\ref{defN})$, then we have the following behavior
\begin{enumerate}
\item[]
\item For $n=0$, we have 
\begin{equation}\label{FirstEigValOmega}
\lambda_{0}\left( \Omega \right) \sim a^{2} \, \left\vert \log(a) \right\vert \quad \text{and} \quad \int_{\Omega} e_{0}(x) \, dx \sim a.
\end{equation}
\item[]
\item For $n \geq 1$, we have 
\begin{equation*}
\lambda_{n}\left( \Omega \right) \sim a^{2}  \quad \text{and} \quad \vert \int_{\Omega} e_{n}(x) \, dx \vert \lesssim a \, \left\vert \log(a) \right\vert^{- 1}.
\end{equation*}
\end{enumerate}
In the particular case when $\Omega$ is a disc of radius $a$, i.e. $\Omega = D$, we have the following behavior 
\begin{enumerate}
\item[]
\item For $n=0$, we have\footnote{Because the eigenvalues of the Newtonian operator are decreasing and as we see that $\lambda_{0,1}$ is the highest one, we refer to $\lambda_{0,1}$ to be the first eigenvalue.} 
\begin{equation}\label{FirstEigValDisc}
\lambda_{0,1}\left( D \right) \sim a^{2} \, \left\vert \log(a) \right\vert \quad \text{and} \quad \int_{D} e_{0,1}(x) \, dx \sim a.
\end{equation}
\begin{equation*}
\qquad \qquad \quad \lambda_{0,j}\left( D \right) \sim a^{2} \,  \quad \text{and} \quad \int_{D} e_{0,j}(x) \, dx \sim  \; a \; \left\vert \log(a) \right\vert^{-1}, \quad j \geq 2.
\end{equation*}
\item[]
\item For $n \geq 1$, we have 
\begin{equation}\label{VanishingInt}
\lambda_{n,j}\left( D \right) \sim a^{2}  \quad \text{and} \quad \int_{D} e_{n,j}(x) \, dx = 0, \quad j \geq 1.
\end{equation}
\end{enumerate}
\end{theorem}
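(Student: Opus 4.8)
The plan is to handle the explicit disc $D=D(0;a)$ first and then transfer to an arbitrary $\Omega$ by squeezing it between the two discs of \textbf{Hypotheses \ref{Hyp}}, using the domain monotonicity established in Section \ref{SectionII}; the eigenvalue scales pass through the monotonicity, while the integrals of the eigenfunctions require a separate energy argument. For the disc eigenvalues $\lambda_{k,j}=a^{2}(\mu_{j}^{(k)})^{-2}$, the case $k\ge 1$ is immediate: the transcendental equation for $\mu_{j}^{(k)}$ does not involve $a$, so $\mu_{j}^{(k)}\sim 1$ and $\lambda_{k,j}\sim a^{2}$. The delicate case is $k=0$, governed by $J_{0}(\mu)+2\log(a)\,\mu\,J_{1}(\mu)=0$. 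Rewriting it as $J_{0}(\mu)/(\mu J_{1}(\mu))=-2\log(a)$ and using $J_{0}(\mu)\to 1$, $J_{1}(\mu)\sim\mu/2$ near $\mu=0$, the left-hand side behaves like $2/\mu^{2}$; balancing it against $2|\log a|$ forces the first root to be small, $\mu_{1}^{(0)}\sim|\log a|^{-1/2}$, hence $\lambda_{0,1}(D)\sim a^{2}|\log a|$. For $j\ge 2$ the large coefficient $\log(a)$ pins the remaining roots near the ($a$-independent) zeros of $J_{1}$, so $\mu_{j}^{(0)}\sim 1$ and $\lambda_{0,j}(D)\sim a^{2}$.

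For the disc integrals I would use $u_{k,j}(r,\phi)=J_{k}(\mu_{j}^{(k)}r/a)e^{ik\phi}$. The angular factor $\int_{0}^{2\pi}e^{ik\phi}\,d\phi$ vanishes for $k\ge 1$, giving $\int_{D}e_{k,j}=0$, which is the assertion \eqref{VanishingInt}. For $k=0$ the identity $\int_{0}^{a}J_{0}(\mu r/a)\,r\,dr=\tfrac{a^{2}}{\mu}J_{1}(\mu)$ and the Lommel formula $\int_{0}^{a}J_{0}(\mu r/a)^{2}\,r\,dr=\tfrac{a^{2}}{2}\big(J_{0}(\mu)^{2}+J_{1}(\mu)^{2}\big)$ yield, for the normalized eigenfunction,
\[
\int_{D}e_{0,j}(x)\,dx=\frac{2\sqrt{\pi}\,a}{\mu_{j}^{(0)}}\,\frac{J_{1}(\mu_{j}^{(0)})}{\sqrt{J_{0}(\mu_{j}^{(0)})^{2}+J_{1}(\mu_{j}^{(0)})^{2}}}.
\]
Inserting the root asymptotics: for $j=1$, $J_{1}(\mu_{1}^{(0)})\sim\mu_{1}^{(0)}/2$ and $J_{0}\to 1$ give $\int_{D}e_{0,1}\sim a$; for $j\ge 2$, the defining equation gives $J_{1}(\mu_{j}^{(0)})=-J_{0}(\mu_{j}^{(0)})/(2\log(a)\,\mu_{j}^{(0)})\sim|\log a|^{-1}$, so $\int_{D}e_{0,j}\sim a|\log a|^{-1}$. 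This proves \eqref{FirstEigValDisc}.

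For an arbitrary $\Omega$, the inclusions $D_{1}\subset\Omega\subset D_{2}$ and the monotonicity of Section \ref{SectionII} give $\lambda_{n}(D_{1})\le\lambda_{n}(\Omega)\le\lambda_{n}(D_{2})$; since $\rho_{1},\rho_{2}\sim a$, the disc estimates squeeze $\lambda_{0}(\Omega)\sim a^{2}|\log a|$ and $\lambda_{n}(\Omega)\sim a^{2}$ for $n\ge 1$. For the integrals set $c_{n}:=\int_{\Omega}e_{n}$, so Parseval gives $\sum_{n}c_{n}^{2}=|\Omega|\sim a^{2}$ and $\sum_{n}\lambda_{n}c_{n}^{2}=\langle N_{\Omega}1,1\rangle=-\tfrac{1}{2\pi}\iint_{\Omega\times\Omega}\log|x-y|\,dx\,dy\sim a^{4}|\log a|$. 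The key step is the splitting $N_{\Omega}1=\beta\,\mathbf 1+g$, with $\beta:=\tfrac{|\log a|}{2\pi}|\Omega|\sim a^{2}|\log a|$ and $g(x):=-\tfrac{1}{2\pi}\int_{\Omega}\log(|x-y|/a)\,dy$, where a rescaling shows $|g(x)|\lesssim a^{2}$ and hence $\|g\|_{\mathbb{L}^{2}(\Omega)}\lesssim a^{3}$. Testing this against $e_{n}$ and using $N_{\Omega}e_{n}=\lambda_{n}e_{n}$ gives $(\lambda_{n}-\beta)c_{n}=\langle g,e_{n}\rangle$; for $n\ge 1$ the gap $|\lambda_{n}-\beta|\sim\beta\sim a^{2}|\log a|$ dominates, so $|c_{n}|\lesssim\|g\|/\beta\lesssim a|\log a|^{-1}$. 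For $n=0$, combining $\lambda_{0}c_{0}^{2}\le\langle N_{\Omega}1,1\rangle$ with $\langle N_{\Omega}1,1\rangle\le\lambda_{0}c_{0}^{2}+\lambda_{1}|\Omega|$ and the scales just obtained forces $c_{0}^{2}\sim a^{2}$, i.e. $\int_{\Omega}e_{0}\sim a$, proving \eqref{FirstEigValOmega}.

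I expect the main obstacle to be the $k=0$ transcendental analysis, and specifically the \emph{lower} bound $\lambda_{0,1}\gtrsim a^{2}|\log a|$, equivalently the quantitative upper bound $\mu_{1}^{(0)}\lesssim|\log a|^{-1/2}$ on the first root, together with the proof that no other root is comparably small; this is the tedious lower bound flagged in the abstract, since the heuristic balancing must be turned into rigorous two-sided control of a root of a Bessel combination with an unbounded coefficient. A secondary technical point is to make the remainder estimate $\|g\|_{\mathbb{L}^{2}(\Omega)}\lesssim a^{3}$ and the energy asymptotics $\langle N_{\Omega}1,1\rangle\sim a^{4}|\log a|$ uniform over all admissible shapes, so that every implied constant depends on $\Omega$ only through the scale $a$ of \textbf{Hypotheses \ref{Hyp}}.
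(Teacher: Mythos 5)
Your proposal is correct in substance and follows the paper's overall architecture: explicit disc analysis first, then the squeeze $D_{1}\subset\Omega\subset D_{2}$ via the domain-monotonicity property \eqref{Monotonicity}. Several of your individual steps, however, are genuinely different, and mostly cleaner. For the disc integrals you invoke Lommel's exact formula $\int_{0}^{a}J_{0}(\mu r/a)^{2}\,r\,dr=\tfrac{a^{2}}{2}\left(J_{0}(\mu)^{2}+J_{1}(\mu)^{2}\right)$, which yields a closed form for the normalized integral; the paper instead brackets the normalization integral $\int_{0}^{\mu_{j}^{(0)}}|J_{0}|^{2}r\,dr$ between its values at consecutive zeros of $J_{0}$ using the interlacing \eqref{DixonResults}, since the closed form it cites is only available at those zeros. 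Your formula also makes the two-sidedness transparent: the denominator $\sqrt{J_{0}^{2}+J_{1}^{2}}$ stays $\sim 1$ because $\mu_{j}^{(0)}$ tends to a zero of $J_{1}$, where $J_{0}\neq 0$ (the paper's appeal to $|J_{0}|\le 1$ alone only justifies the upper bound). For the arbitrary domain, your decomposition $N_{\Omega}\mathbf{1}=\beta\,\mathbf{1}+g$ with the identity $(\lambda_{n}-\beta)c_{n}=\langle g,e_{n}\rangle$ is precisely the paper's relation \eqref{LBII} written in unscaled variables ($g$ is the rescaled $a^{2}N_{\Omega^{\star}}(1)$, and $\lambda_{n}-\beta=a^{2}\left[\lambda_{n}/a^{2}-\tfrac{|\log a|}{2\pi}|\Omega^{\star}|\right]$), but you reach $|c_{n}|\lesssim a|\log a|^{-1}$ in one stroke, whereas the paper first derives the weaker $|\log a|^{-1/2}$ bound from the quadratic-form identity \eqref{Formulafrom2DWork} and then improves it. For $n=0$ your Parseval sandwich $\langle N_{\Omega}1,1\rangle-\lambda_{1}|\Omega|\le\lambda_{0}c_{0}^{2}\le\langle N_{\Omega}1,1\rangle$ is genuinely different from the paper's argument leading to \eqref{Equa2}: it replaces boundedness of the scaled Rayleigh quotients $\tilde{\lambda}_{n}$ by the direct two-sided estimate $\langle N_{\Omega}1,1\rangle\sim a^{4}|\log a|$, which does hold uniformly under Hypotheses \ref{Hyp}, since $\diam\Omega\lesssim a$ forces $-\log|x-y|\gtrsim|\log a|$ on $\Omega\times\Omega$ while the contribution of $\log(|x-y|/a)$ is only $O(a^{4})$.

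The one place where your argument is not yet a proof is the localization of the roots $\mu_{j}^{(0)}$, $j\ge 2$, of \eqref{EquaBessel}. Saying that the large coefficient $\log(a)$ "pins the remaining roots near the zeros of $J_{1}$" is the right picture, but by itself it neither excludes a second root tending to $0$ as $a\to 0$ nor establishes the count and ordering of the roots; you flag this yourself as the expected main obstacle. The paper closes exactly this gap with Dixon's theorem (interlacing of the zeros of $J_{0}$ and $J_{1}$ with those of the combination), which yields \eqref{DixonResults} and hence the uniform lower bound $\mu_{j}^{(0)}>\alpha_{1,1}\approx 3.83$ for $j\ge 2$. With that single input supplied, the rest of your proof goes through.
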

\bigskip
We have seen in Subsection \ref{SubsectionI}, how the eigenvalues and the integral of their corresponding eigenfunctions scales with respect to the radius of the used ball, that we have denoted by $'a'$, see for instance $(\ref{3DEigS})$. Based on the previous theorem, the goal of the coming proposition is to generalize the obtained scales, i.e. $(\ref{3DEigS})$, when the 3D-Newtonian potential operator is defined over an arbitrary shape domain $\Omega \subset \mathbb{R}^{3}$. 
\begin{proposition}\label{PropoII}
Assume that $\Omega \subset \mathbb{R}^{3}$ such that $\left\vert \Omega \right\vert \sim a^{3}$. Let $\left( \lambda_{n}(\Omega); e_{n} \right)_{n \geq 0}$ the eigen-system associated to the 3D-Newtonian potential operator $N_{\Omega}(\cdot)$, defined by $(\ref{defN})$. Then, for $n \geq 0$, we have the following behaviour
\begin{equation*}
\lambda_{n}(\Omega) \, \sim \, a^{2} \;\; \text{and} \;\; \left\vert \int_{\Omega} e_{n}(x) \, dx \right\vert \, \lesssim \, a^{\frac{3}{2}}.
\end{equation*}
\end{proposition}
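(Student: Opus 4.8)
The plan is to prove Proposition \ref{PropoII} by the same two-step strategy that the paper announces for the 2D case in Theorem \ref{PrincipleResult}: first establish the scaling on a ball, then transfer it to a general domain $\Omega \subset \mathbb{R}^3$ by a domain-monotonicity (inclusion) argument. The 3D situation is in fact the easier of the two because, as the authors themselves emphasize in $(\ref{KernelScale})$, the 3D kernel scales cleanly as $\phi_0(x,y) = a^{-1}\phi_0(\tilde x,\tilde y)$ with no additive logarithmic correction. This homogeneity is what will let me avoid the delicate $|\log a|$ bookkeeping that plagues the planar case.

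\medskip
\emph{Step 1 (exact scaling on the unit ball).} First I would record the precise relation between the eigenvalues of $N_\Omega$ and those of $N_{a\Omega}$ under the dilation $x = a\tilde x$. Writing $B_a$ for a ball of radius $a$ centered anywhere, the change of variables $x = z + a\tilde x$, $y = z + a\tilde y$ together with $(\ref{KernelScale})$ gives $N_{B_a}(f)(x) = a^{-1}\cdot a^{d}\, \big(N_{B_1} \tilde f\big)(\tilde x)$ in dimension $d=3$, so that the eigenvalues obey $\lambda_n(B_a) = a^{2}\,\lambda_n(B_1)$. Since $\lambda_n(B_1)$ is a fixed positive constant (independent of $a$), this yields $\lambda_n(B_a)\sim a^2$ for every $n\geq 0$, recovering $(\ref{EVNB})$ without recourse to the explicit Bessel-root formula. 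For the integral of the normalized eigenfunction, the same substitution converts $\int_{B_a} v_n \,dx$ into $a^{3/2}\int_{B_1}\tilde v_n\,d\tilde y$ after tracking the $L^2$-normalization (the $L^2$ mass picks up $a^{d/2}=a^{3/2}$ under dilation), giving $|\int_{B_a} v_n\,dx|\lesssim a^{3/2}$, which is exactly $(\ref{EFNB})$.

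\medskip
\emph{Step 2 (domain monotonicity and squeezing).} Since $|\Omega|\sim a^3$, I would fix two balls $B_{c_1 a}\subset\Omega\subset B_{c_2 a}$ with $c_1,c_2$ absolute constants — this is the 3D analogue of \textbf{Hypotheses \ref{Hyp}}, and such balls exist once we know $\Omega$ has volume comparable to $a^3$ (strictly speaking one needs a mild regularity assumption guaranteeing an inscribed ball of comparable radius; I would state this as the standing hypothesis, matching the planar setup). The \emph{domain monotonicity property of the Newtonian eigenvalues} proved in Section \ref{SectionII} then gives $\lambda_n(B_{c_1 a})\leq \lambda_n(\Omega)\leq \lambda_n(B_{c_2 a})$ for all $n$. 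Combining with Step~1, both outer bounds scale like $a^2$, so $\lambda_n(\Omega)\sim a^2$ is squeezed out immediately. For the eigenfunction integral I would apply Cauchy--Schwarz, $|\int_\Omega e_n\,dx|\leq |\Omega|^{1/2}\|e_n\|_{L^2(\Omega)} = |\Omega|^{1/2}\sim a^{3/2}$ for the normalized eigenfunction, which is robust and requires no spectral information at all — this is the clean shortcut available in 3D where no logarithmic obstruction forces a sharper analysis.

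\medskip
\emph{Anticipated obstacle.} The genuinely delicate point is the geometric step of producing the inscribed ball $B_{c_1 a}$: the bare condition $|\Omega|\sim a^3$ does not by itself force an inscribed ball of radius $\sim a$ (a thin flat pancake has large volume but small inradius). I expect to need either to strengthen the hypothesis to a uniform inradius/outradius condition (as is implicitly done via \textbf{Hypotheses \ref{Hyp}} in 2D) or to invoke Lipschitz-regularity with controlled constants. Once the two-sided ball inclusion is in hand, the rest is a routine application of monotonicity plus the clean 3D dilation identity, and the Cauchy--Schwarz bound on $\int_\Omega e_n$ is immediate. In contrast to the 2D proof, no careful tracking of $\log a$ terms is required here, which is precisely why the 3D proposition is stated so compactly.
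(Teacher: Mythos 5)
Your proposal is correct in substance, but it follows a genuinely different --- and heavier --- route than the paper's own proof, which uses neither balls nor domain monotonicity in 3D. The paper applies the dilation $\Omega = z + a\,\Omega^{\star}$ \emph{directly} to the eigenvalue equation $N_{\Omega}(e_{n}) = \lambda_{n}(\Omega)\, e_{n}$: because the 3D kernel scales homogeneously by $(\ref{KernelScale})$, one gets the exact identity $\lambda_{n}(\Omega) = a^{2}\, \lambda_{n}(\Omega^{\star})$ for the \emph{arbitrary} domain $\Omega$ itself, whence $\lambda_{n}(\Omega) \sim a^{2}$ with no squeezing step; the integral estimate is then obtained by integrating the eigenvalue equation and bounding $\lambda_{n}(\Omega)\, \vert \int_{\Omega} e_{n}(x)\, dx \vert \leq \vert \Omega \vert^{1/2}\, \Vert N_{\Omega} \Vert\, \Vert e_{n} \Vert_{\mathbb{L}^{2}(\Omega)} \leq \vert \Omega \vert^{1/2}\, \lambda_{0}(\Omega)$. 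Indeed, the paper follows the proposition with a remark stressing exactly the point your plan inverts: in 3D the monotonicity property and \textbf{Hypotheses \ref{Hyp}} are \emph{not needed}, precisely because the clean scaling you yourself invoke in Step 1 works on any $\Omega$, not just on balls. This also dissolves your ``anticipated obstacle'': no inscribed ball of radius $\sim a$ is required; what is implicitly required (by both your argument and the paper's) is that $\Omega$ be a fixed reference shape $\Omega^{\star}$ dilated by $a$, so that $\lambda_{n}(\Omega^{\star}) \sim 1$ --- and under that reading an inscribed ball exists for free anyway. Your Step 2 machinery is nonetheless legitimate (the monotonicity proof of Section \ref{SectionII} carries over verbatim to 3D, where positivity of the kernel is immediate), it just buys nothing and costs an extra geometric hypothesis. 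Conversely, your treatment of the eigenfunction integral is a genuine simplification over the paper's: plain Cauchy--Schwarz gives $\vert \int_{\Omega} e_{n}(x)\, dx \vert \leq \vert \Omega \vert^{1/2} \sim a^{3/2}$ with a constant uniform in $n$, whereas the paper's detour through the eigenvalue equation produces the extra factor $\lambda_{0}(\Omega)/\lambda_{n}(\Omega)$, which is independent of $a$ but degrades as $n$ grows.
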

\begin{proof}{of \textbf{Proposition \ref{PropoII}}}
\newline
By definition we have 
\begin{equation}\label{OmegaEigSys}
N_{\Omega}\left( e_{n} \right) = \lambda_{n}(\Omega) \; e_{n},
\end{equation}
and, thanks to $(\ref{KernelScale})$, we get after scaling from $\Omega$ to $\Omega^{\star}$,  
\begin{equation*}
a^{2} \;  N_{\Omega^{\star}}\left( \tilde{e}_{n} \right) = \lambda_{n}(\Omega) \; \tilde{e}_{n},
\end{equation*}
where $\Omega^{\star}$ is such that $\left\vert \Omega^{\star} \right\vert \, \sim \, 1$ and $\Omega = z + a \, \Omega^{\star}$. We see that $\tilde{e}_{n}$ are eigenfunctions of $N_{\Omega^{\star}}(\cdot)$, in the domain $\Omega^{\star}$, related to the eigenvalues $\lambda_{n}(\Omega^{\star}) = a^{-2} \, \lambda_{n}(\Omega)$, where obviously $\lambda_{n}(\Omega^{\star}) \, \sim \, 1$, with respect to the parameter $a$. This indicate, 
\begin{equation}\label{EstEigOmega3D}
\lambda_{n}(\Omega) \, \sim \, a^{2}, \qquad \forall \, n \geq 0.
\end{equation}
Furthermore, by integrating both sides of  $(\ref{OmegaEigSys})$ and taking the modulus in both sides, we obtain  
\begin{equation*}
\lambda_{n}(\Omega) \; \left\vert \int_{\Omega} e_{n}(x) \, dx \right\vert  = \left\vert \int_{\Omega} N_{\Omega}\left( e_{n} \right)(x) \, dx \right\vert \leq \left\Vert 1 \right\Vert_{\mathbb{L}^{2}(\Omega)} \; \left\Vert N_{\Omega}(\cdot) \right\Vert_{\mathcal{L}\left(\mathbb{L}^{2}(\Omega) ; \mathbb{L}^{2}(\Omega) \right)} \; \left\Vert e_{n} \right\Vert_{\mathbb{L}^{2}(\Omega)}.
\end{equation*}
It is know from the spectral theory that $\left\Vert N_{\Omega}(\cdot) \right\Vert_{\mathcal{L}\left(\mathbb{L}^{2}(\Omega) ; \mathbb{L}^{2}(\Omega) \right)} \leq \underset{n}{Sup}\left(\lambda_{n}(\Omega) \right)=\lambda_{0}(\Omega)$, where the last equality is a consequence of the fact that the sequence of eigenvalues is decreasing. In addition, we know that the sequence $\left\{ e_{n}(\cdot) \right\}_{n \geq 0}$ is orthonormalized in $\mathbb{L}^{2}(\Omega)$. Then, 
\begin{equation*}
 \left\vert \int_{\Omega} e_{n}(x) \, dx \right\vert   \leq \left\vert \Omega \right\vert^{\frac{1}{2}} \; \frac{\lambda_{0}(\Omega)}{\lambda_{n}(\Omega)} \; \overset{(\ref{EstEigOmega3D})}{\sim} \; \left\vert \Omega \right\vert^{\frac{1}{2}} \; \sim  \; a^{\frac{3}{2}}.
\end{equation*}
This concludes the proof of Proposition \ref{PropoII}.
\end{proof}
\begin{remark}
As we can see in the proof of Proposition \ref{PropoII}, the \textit{property of domain monotonicity} for the Newtonian potential operator is no longer used. Consequently, for the 3D-Newtonian potential operator, the  \textbf{Hypotheses \ref{Hyp}} is no longer needed. 
\end{remark}

\bigskip



\section{Proof of Theorem \ref{PrincipleResult}}\label{SectionII}
We split the proof into two steps, in the first one we justify the result in the case of a disc and in the second step we prove the result for a general shape satisfying \textbf{Hypotheses \ref{Hyp}}. 
\begin{enumerate}
\item[]
\item The case of a disc $D$ of radius $a$. 
\medskip
\newline
We recall, from \cite[Theorem 4.1]{kalmenov2011boundary}, that the eigenvalues  of the logarithmic potential operator for a disc are given by: 
\begin{equation}\label{EigValDef}
\lambda_{k,j} = a^{2} \, \left( \mu_{j}^{(k)} \right)^{-2}, \qquad k=0,1,2,\cdots \quad \text{and} \quad j=1,2,\cdots 
\end{equation} 
and the corresponding eigenfunctions given by
\begin{equation}\label{Eigfcts}
u_{k,j}(r,\phi) = \begin{LARGE}
\textbf{J}_{k}
\end{LARGE}\left(\mu_{j}^{(k)} \, \frac{r}{a}\right) \, e^{i k \phi}, \qquad r \in [0,a] \quad \text{and} \quad \phi \in [0,2\pi],
\end{equation} 
where $\begin{LARGE}
\textbf{J}_{k}
\end{LARGE}$ is the Bessel function of the first kind of order $k$ and $\mu_{j}^{(k)}$ are the roots of the following transcendental equation
\begin{equation}\label{EigValk>1}
k \, \begin{LARGE}
\textbf{J}_{k}
\end{LARGE}\left( \mu_{j}^{(k)} \right) + \frac{\mu_{j}^{(k)}}{2} \, \left( \begin{LARGE}
\textbf{J}_{k-1}
\end{LARGE}\left( \mu_{j}^{(k)} \right) - \begin{LARGE}
\textbf{J}_{k+1}
\end{LARGE}\left( \mu_{j}^{(k)} \right) \right) = 0, \qquad k=1,2,\cdots,
\end{equation}
and, for $k=0$, 
\begin{equation}\label{EquaBessel}
\begin{LARGE}
\textbf{J}_{0}
\end{LARGE}
\left( \mu_{j}^{(0)} \right) + 2 \, \log(a) \, \mu_{j}^{(0)} \, \begin{LARGE}
\textbf{J}_{1}
\end{LARGE}
\left( \mu_{j}^{(0)} \right) = 0.
\end{equation}
In \cite[Appendix IV, Table III]{carslaw1947conduction}, for some specific values of the parameter $a$, the authors  computed the first six roots of the transcendental equation $(\ref{EquaBessel})$. It is clear, from $(\ref{EquaBessel})$, that the solution $\left\{ \mu_{j}^{(0)} \right\}_{j \geq 1}$ will be dependent on the parameter $a$. To see closely how the solutions depends on $a$, we start   by plotting the graph\footnote{These graphs were produced with Mathematica.} associated to the function, with parameter $a$, defined by 
\begin{equation}\label{DefPsi}
\Psi_{a}(x) := \begin{LARGE}
\textbf{J}_{0}
\end{LARGE}
\left( x \right) + 2 \, \log(a) \, x \, \begin{LARGE}
\textbf{J}_{1}
\end{LARGE}
\left( x \right).
\end{equation}
\begin{figure}[H]
\includegraphics[scale=0.7]{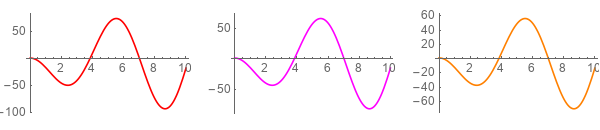}
\caption{A schematic representation of the function $\Psi_{a}(\cdot)$, defined by $(\ref{DefPsi})$. The red graph is associated to  $x \rightarrow \begin{LARGE}
\textbf{J}_{0}
\end{LARGE}
\left( x \right) - 40 \, x \, \begin{LARGE}
\textbf{J}_{1}
\end{LARGE}
\left( x \right)$. The magenta graph is associated to  $x \rightarrow \begin{LARGE}
\textbf{J}_{0}
\end{LARGE}
\left( x \right) - 35 \, x \, \begin{LARGE}
\textbf{J}_{1}
\end{LARGE}
\left( x \right)$. The orange graph is associated to  $x \rightarrow \begin{LARGE}
\textbf{J}_{0}
\end{LARGE}
\left( x \right) - 30 \, x \, \begin{LARGE}
\textbf{J}_{1}
\end{LARGE}
\left( x \right)$.}
\end{figure} 
From the graphs, we can see clearly that the first root, i.e $\mu_{1}^{(0)}$, is small and the other roots $\mu_{j}^{(0)}$, for $j \geq 1$, are moderate. To determine the order of smallness of $\mu_{1}^{(0)}$, we use the asymptotic behavior of $\begin{LARGE}
\textbf{J}_{0}
\end{LARGE}$  and $\begin{LARGE}
\textbf{J}_{1}
\end{LARGE}$ for small argument. More precisely, for $0 < x \ll \sqrt{n+1}$, see \cite[Equation 25]{LANDAU}, we have
\begin{equation}\label{BesselNearZero}
\begin{LARGE}
\textbf{J}_{n}
\end{LARGE}\left( x \right) \sim \frac{1}{n!} \, \left( \frac{x}{2} \right)^{n}. 
\end{equation}
Now, using $(\ref{BesselNearZero})$ we approximate $(\ref{EquaBessel})$ as   
\begin{equation*}
1 +  \log(a) \, \left( \mu_{1}^{(0)} \right)^{2}=0,
\end{equation*}
and this implies that 
\begin{equation*}
\mu_{1}^{(0)} \; \sim \; \frac{1}{\sqrt{\left\vert \log(a) \right\vert}}.
\end{equation*}
Consequently, 
\begin{equation*}
\lambda_{0,1} \; \sim \; a^{2} \, \left\vert \log(a) \right\vert.
\end{equation*}
To study the other roots, we start by
setting $\alpha_{k,j}$, for $k=0,1$ and $j \in \mathbb{N}$, to be the root of order $j$ associated to the Bessel function  $\begin{LARGE}
\textbf{J}_{k}
\end{LARGE}$. Then, thanks to Dixon's theorem, see \cite{watson}, we  know that $\mu_{j}^{(0)}$ will be interlaced between the roots of $\begin{LARGE}
\textbf{J}_{0}
\end{LARGE}$  and the roots of $\begin{LARGE}
\textbf{J}_{1}
\end{LARGE}$. More precisely, 
\begin{equation}\label{DixonResults}
\mu_{1}^{(0)} < \alpha_{0,1} < \alpha_{1,1} \quad \text{and} \quad \alpha_{1,j-1} < \mu_{j}^{(0)} < \alpha_{0,j}, \quad j \geq 2. 
\end{equation}
Using the previous relation and knowing that 
 $ \alpha_{1,1} = 3.8317$, we deduce that $\mu_{j}^{(0)} > 3.8317$, for $j \geq 2$. Therefore, for $j \geq 2$, we have $\mu_{j}^{(0)} \; \sim \; 1$. Hence, from $(\ref{EigValDef})$, we obtain
\begin{equation*}
\lambda_{0,j} \; \sim \; a^{2}, \quad \text{for} \quad j \geq 2.
\end{equation*}
Next, contrary to $(\ref{EquaBessel})$, because the equation $(\ref{EigValk>1})$ is independent on the parameter $a$, we deduce that $\mu_{j}^{(k)} \sim 1$. Hence, the eigenvalues $\lambda_{k,j}$ defined by $(\ref{EigValDef})$ behave as  
\begin{equation*}
\lambda_{k,j} \; \sim \; a^{2}, \quad \text{for} \quad k \geq 1 \quad \text{and} \quad j \geq 1. 
\end{equation*} 
Arranging the obtained results we get: 
\begin{equation}\label{BehaviourEigVal}
\lambda_{0,1} \; \sim \; a^{2} \, \left\vert \log(a) \right\vert  \quad \text{and} \quad \lambda_{k,j} \; \sim \; a^{2}, \quad \text{for} \quad (k,j) \neq (0,1).
\end{equation}
\medskip
\newline
In what follows, when we talk about the first eigenvalue of the Newtonian potential operator in the disc we refer to $\lambda_{0,1}$. This is because the eigenvalues of the Newtonian operator are decreasing and, as proved by $(\ref{BehaviourEigVal})$, $\lambda_{0,1}$ is the highest one.
\medskip
\newline
Similarly to $(\ref{BehaviourEigVal})$, we derive an analogous  result for the integrals of the associated  eigenfunctions $u_{k,j}(\cdot,\cdot)$, defined by $(\ref{Eigfcts})$. We start with the coming computations. 
\begin{eqnarray*}
\int_{D} u_{k,j}(x) \, dx = \int_{0}^{2\pi} \int_{0}^{a}  u_{k,j}(r,\phi) \, r \ dr \, d\phi &=& \int_{0}^{2\pi} \int_{0}^{a}  \begin{LARGE}
\textbf{J}_{k}
\end{LARGE}\left(\mu_{j}^{(k)} \, \frac{r}{a}\right) \, e^{i k \phi} \, r \ dr \, d\phi \\
&=& 2 \, \pi \, \int_{0}^{a}  \begin{LARGE}
\textbf{J}_{0}
\end{LARGE}\left(\mu_{j}^{(0)} \, \frac{r}{a}\right)  \, r \ dr \,\, \delta_{0,k} \\
&=& 2 \, \pi \, \left( \frac{a}{\mu_{j}^{(0)}} \right)^{2} \, \int_{0}^{\mu_{j}^{(0)}}  \begin{LARGE}
\textbf{J}_{0}
\end{LARGE}\left( r \right)  \, r \ dr \,\, \delta_{0,k},
\end{eqnarray*}
where $\delta_{\cdot,\cdot}$ is the Kronecker-symbol. Thanks to \cite[Formula (5), page 206]{carslaw1947conduction}, we know that
\begin{equation*}
x \, \begin{LARGE}
\textbf{J}_{1}
\end{LARGE}\left( x \right) = \int_{0}^{x} r \, \begin{LARGE}
\textbf{J}_{0}
\end{LARGE}\left( r \right) \, dr.
\end{equation*}
Then, 
\begin{equation}\label{IntEigFcts}
\int_{D} u_{k,j}(x) \, dx = 2 \, \pi \,  \frac{a^{2}}{\mu_{j}^{(0)}}  \,  \begin{LARGE}
\textbf{J}_{1}
\end{LARGE}\left( \mu_{j}^{(0)} \right)  \, \delta_{0,k}.
\end{equation}
Later, to define the normalized eigenfunctions, we compute the $\left\Vert u_{0,j} \right\Vert_{\mathbb{L}^{2}(D)}$. We have, 
\begin{eqnarray*}
\left\Vert u_{0,j} \right\Vert^{2}_{\mathbb{L}^{2}(D)}  :=  \int_{D} \left\vert u_{0,j} \right\vert^{2}(x) \, dx = \int_{0}^{2 \, \pi} \, \int_{0}^{a} \left\vert u_{0,j} \right\vert^{2}(r, \phi) \, r \, dr \, d\phi & = & 2 \, \pi \, \int_{0}^{a} \left\vert \begin{LARGE}
\textbf{J}_{0}
\end{LARGE}\left(\mu_{j}^{(0)} \, \frac{r}{a}\right) \right\vert^{2} \, r \, dr \\
&=& 2 \, \pi \, \left( \frac{a}{\mu_{j}^{(0)}} \right)^{2} \, \int_{0}^{\mu_{j}^{(0)}} \left\vert \begin{LARGE}
\textbf{J}_{0}
\end{LARGE}\left(r \right) \right\vert^{2} \, r \, dr.
\end{eqnarray*}
Combining the previous formula with $(\ref{IntEigFcts})$, gives us a formula for the integral of the normalized eigenfunctions. More precisely, 
\begin{equation}\label{IntNEigFcts}
\int_{D} v_{0,j}(x) \, dx := \frac{\int_{D} u_{0,j}(x) \, dx}{\left\Vert u_{0,j} \right\Vert_{\mathbb{L}^{2}(D)}} = \dfrac{\sqrt{2 \, \pi} \; a \; \begin{LARGE}
\textbf{J}_{1}
\end{LARGE}\left( \mu_{j}^{(0)} \right)}{\left[\int_{0}^{\mu_{j}^{(0)}} \left\vert \begin{LARGE}
\textbf{J}_{0}
\end{LARGE}\left(r \right) \right\vert^{2} \, r \, dr\right]^{\frac{1}{2}}}.
\end{equation} 
As a result of the different behaviors of $\left\{ \mu_{j}^{(0)} \right\}_{j \geq 1}$, with respect to the parameter $a$, to estimate $(\ref{IntNEigFcts})$ we split our computations into two steps.
\begin{enumerate}
\item[]
\item For $j=1$, as $\mu_{1}^{(0)}$ is small, we have  
\begin{equation*}
\int_{D} v_{0,1}(x) \, dx  = \frac{\sqrt{2 \, \pi} \; a \; \begin{LARGE}
\textbf{J}_{1}
\end{LARGE}\left( \mu_{1}^{(0)} \right)}{\left[\int_{0}^{\mu_{1}^{(0)}} \left\vert \begin{LARGE}
\textbf{J}_{0}
\end{LARGE}\left(r \right) \right\vert^{2} \, r \, dr\right]^{\frac{1}{2}}} \overset{(\ref{BesselNearZero})}{\sim}  \frac{\sqrt{2 \, \pi} \; a \; \dfrac{\mu_{1}^{(0)}}{2}}{\left[\int_{0}^{\mu_{1}^{(0)}} r \, dr\right]^{\frac{1}{2}}} = \sqrt{\pi} \, a \; \sim \; a .
\end{equation*} 
\item[]
\item For $j \geq 2$, as $\mu_{j}^{(0)}$ is moderate, we have 
\begin{equation*}
\int_{D} v_{0,j}(x) \, dx  = \dfrac{\sqrt{2 \, \pi} \; a \; \begin{LARGE}
\textbf{J}_{1}
\end{LARGE}\left( \mu_{j}^{(0)} \right)}{\left[\int_{0}^{\mu_{j}^{(0)}} \left\vert \begin{LARGE}
\textbf{J}_{0}
\end{LARGE}\left(r \right) \right\vert^{2} \, r \, dr\right]^{\frac{1}{2}}}.
\end{equation*}
By induction on the formula $(\ref{DixonResults})$, we obtain for $j \geq 2$ the following relation 
\begin{equation*}
\alpha_{0,j-1} < \mu_{j}^{(0)} < \alpha_{0,j}.
\end{equation*}
Then, 
\begin{equation*}
\dfrac{\sqrt{2 \, \pi} \; a \; \left\vert \begin{LARGE}
\textbf{J}_{1}
\end{LARGE}\left( \mu_{j}^{(0)} \right) \right\vert}{\left[\int_{0}^{\alpha_{0,j}} \left\vert \begin{LARGE}
\textbf{J}_{0}
\end{LARGE}\left(r \right) \right\vert^{2} \, r \, dr\right]^{\frac{1}{2}}} < \left\vert \int_{D} v_{0,j}(x) \, dx \right\vert  < \dfrac{\sqrt{2 \, \pi} \; a \; \left\vert \begin{LARGE}
\textbf{J}_{1}
\end{LARGE}\left( \mu_{j}^{(0)} \right) \right\vert}{\left[\int_{0}^{\alpha_{0,j-1}} \left\vert \begin{LARGE}
\textbf{J}_{0}
\end{LARGE}\left(r \right) \right\vert^{2} \, r \, dr\right]^{\frac{1}{2}}},
\end{equation*}
or, equivalently, 
\begin{equation*}
\dfrac{\sqrt{2 \, \pi} \; a \; \left\vert \begin{LARGE}
\textbf{J}_{1}
\end{LARGE}\left( \mu_{j}^{(0)} \right) \right\vert}{\alpha_{0,j} \, \left[\int_{0}^{1} \left\vert \begin{LARGE}
\textbf{J}_{0}
\end{LARGE}\left(\alpha_{0,j} \, r \right) \right\vert^{2} \, r \, dr\right]^{\frac{1}{2}}} < \left\vert \int_{D} v_{0,j}(x) \, dx \right\vert  < \dfrac{\sqrt{2 \, \pi} \; a \; \left\vert \begin{LARGE}
\textbf{J}_{1}
\end{LARGE}\left( \mu_{j}^{(0)} \right) \right\vert}{\alpha_{0,j-1} \, \left[\int_{0}^{1} \left\vert \begin{LARGE}
\textbf{J}_{0}
\end{LARGE}\left(\alpha_{0,j-1} \, r \right) \right\vert^{2} \, r \, dr\right]^{\frac{1}{2}}}.
\end{equation*}
Now, thanks to \cite[Formula (2), page 205]{carslaw1947conduction}, we know that
\begin{equation*}
\int_{0}^{1} \left( \begin{LARGE}
\textbf{J}_{0}
\end{LARGE}\left(\alpha_{0,k} \, r \right) \right)^{2} \, r \, dr = \frac{1}{2} \, \left( \begin{LARGE}
\textbf{J}_{1}
\end{LARGE}\left(\alpha_{0,k} \right) \right)^{2} \sim 1, \quad \text{for} \quad k \in \mathbb{N},
\end{equation*}
hence,
\begin{equation*}
\dfrac{2 \, \sqrt{ \pi} \; a \; \left\vert \begin{LARGE}
\textbf{J}_{1}
\end{LARGE}\left( \mu_{j}^{(0)} \right) \right\vert}{\alpha_{0,j} \, \left\vert \begin{LARGE}
\textbf{J}_{1}
\end{LARGE}\left(\alpha_{0,j} \right) \right\vert} < \left\vert \int_{D} v_{0,j}(x) \, dx \right\vert  < \dfrac{2 \, \sqrt{ \pi} \; a \; \left\vert \begin{LARGE}
\textbf{J}_{1}
\end{LARGE}\left( \mu_{j}^{(0)} \right) \right\vert}{\alpha_{0,j-1} \, \left\vert \begin{LARGE}
\textbf{J}_{1}
\end{LARGE}\left(\alpha_{0,j-1} \right) \right\vert}.
\end{equation*}
As $\alpha_{0,k} \sim 1$ and $\left\vert \begin{LARGE}
\textbf{J}_{1}
\end{LARGE}\left(\alpha_{0,k} \right) \right\vert \sim 1$, for $k=j-1$ or $k=j$, with respect to the parameter $a$, we deduce that
\begin{equation*}
 a \; \left\vert \begin{LARGE}
\textbf{J}_{1}
\end{LARGE}\left( \mu_{j}^{(0)} \right) \right\vert \lesssim \left\vert \int_{D} v_{0,j}(x) \, dx \right\vert \lesssim a \; \left\vert \begin{LARGE}
\textbf{J}_{1}
\end{LARGE}\left( \mu_{j}^{(0)} \right) \right\vert, 
\end{equation*}
or, equivalently, 
\begin{equation*}
\left\vert \int_{D} v_{0,j}(x) \, dx \right\vert \; \sim \; a \; \left\vert \begin{LARGE}
\textbf{J}_{1}
\end{LARGE}\left( \mu_{j}^{(0)} \right) \right\vert \overset{(\ref{EquaBessel})}{=} \frac{a \; \left\vert \begin{LARGE}
\textbf{J}_{0}
\end{LARGE}\left( \mu_{j}^{(0)} \right) \right\vert}{2 \, \mu_{j}^{(0)} \, \left\vert \log(a) \right\vert}.
\end{equation*}
The final step consist in using the fact that\footnote{This can be proved using the integral representation of the Bessel function 
\begin{equation*}
\begin{LARGE}
\textbf{J}_{0}
\end{LARGE}\left( x \right) := \frac{1}{\pi} \, \int_{0}^{\pi} \cos\left( x \, \sin(\tau)\right) \, d\tau, 
\end{equation*}
see \cite[Formula 9.19, page 230]{Temme}.}  $\left\vert \begin{LARGE}
\textbf{J}_{0}
\end{LARGE}\left( x \right) \right\vert \leq 1, \, \forall \, x \in \mathbb{R}$, see \cite[Formula (5), page 31]{watson}. Hence,  
\begin{equation*}
\left\vert \int_{D} v_{0,j}(x) \, dx \right\vert \; \sim \; a \;  \left\vert \log(a) \right\vert^{-1}.
\end{equation*}
\end{enumerate} 
Analogously to $(\ref{BehaviourEigVal})$, after arranging the obtained results we deduce that: 
\begin{equation}\label{IranProtests}
\int_{D} v_{0,1}(x) \, dx \;\; \sim \;\; a  \quad \text{and} \quad \int_{D} v_{0,j}(x) \, dx \;\; \sim \;\; a \, \left\vert \log(a) \right\vert^{-1}, \quad j \geq 2.
\end{equation}
Obviously, 
\begin{equation*}
\int_{D} v_{k,j}(x) \, dx = 0, \quad k \geq 1 \quad \text{and}  \quad j \geq 1.
\end{equation*}
\item[]
\item The case of an arbitrary shape $\Omega$.\medskip
\newline
To estimate the behavior of the eigenvalues of the Newtonian potential operator defined over an arbitrary domain $\Omega$, we proceed in two steps. 
\begin{enumerate}
\item[]
\item \label{PDM}
 We start by proving the \textit{ property of domain monotonicity} for the Newtonian potential operator: eigenvalues of the Newtonian potential operator monotonously increase when the domain is enlarged, i.e., $\lambda_{n}\left( \Omega_{1} \right) \leq \lambda_{n}\left( \Omega_{2} \right)$ if $ \Omega_{1} \subset \Omega_{2}$. To accomplish this, we define an extension operator $\bm{P}$, as follows: 
\smallskip
\newline
\begin{eqnarray}\label{Extension}
\nonumber
\bm{P} := \mathbb{L}^{2}\left( \Omega_{1} \right) & \longrightarrow & \mathbb{L}^{2}\left( \Omega_{2} \right) \\
f(\cdot) & \longrightarrow & \bm{P}\left( f \right)(\cdot) := \tilde{f}(\cdot) := f(\cdot) \; \underset{\Omega_{1}}{\chi}(\cdot) \; + \; 0 \; \underset{\Omega_{2} \setminus \Omega_{1}}{\chi}(\cdot),
\end{eqnarray}  
where we have assumed that $\Omega_{1} \subset \Omega_{2}$. Manifestly, we have the following injection 
\begin{equation}\label{Injection}
\mathbb{L}^{2}\left( \Omega_{1} \right) \hookrightarrow \mathbb{L}^{2}\left( \Omega_{2} \right).
\end{equation}
In similar manner we define the extension of a function represented by the Newtonian operator 
\begin{equation*}
u(x) = \int_{\Omega_{1}} \Phi_{0}(x,y) \, f(y) \, dy, \quad x \in \Omega_{1},   
\end{equation*} 
to the domain $\Omega_{2}$,  as follows
\begin{equation*}
\tilde{u}(x) = \int_{\Omega_{2}} \Phi_{0}(x,y) \, \tilde{f}(y) \, dy, \quad x \in \Omega_{2},   
\end{equation*}
where $\tilde{f}(\cdot)$ is the extension of $  f(\cdot)$, with zero in $\Omega_{2} \setminus \Omega_{1}$, defined by $(\ref{Extension})$.
Now, from the min-max principle applied to the Newtonian potential operator, see  \cite[Theorem 4, page 318-319]{lax2002functional}, we have\footnote{In $(\ref{min-max})$, the positivity of the operator $N_{\Omega_{1}}(\cdot)$ ensures that all eigenvalues are non-negative.} 
\begin{equation}\label{min-max}
\lambda_{k}\left( \Omega_{1} \right) = \underset{\Xi_{k}}{Sup} \quad \underset{u \in \left( \Xi_{k} \cap  \mathbb{L}^{2}(\Omega_{1}) \right)}{Inf} \quad \dfrac{\int_{\Omega_{1}} N_{\Omega_{1}}\left(u\right)(x) u(x) \; dx}{\int_{\Omega_{1}} \left\vert u \right\vert^{2}(x) \; dx},
\end{equation}
where $\Xi_{k} \subset \mathbb{L}^{2}(\mathbb{R}^{2})$ is $k$ dimensional subspace.  Now, using the injection $(\ref{Injection})$ we deduce that:
\begin{equation}\label{Black}
\lambda_{k}\left( \Omega_{1} \right) \leq  \underset{\Xi_{k}}{Sup} \quad \underset{u \in \left( \Xi_{k} \cap   \mathbb{L}^{2}(\Omega_{2}) \right)}{Inf} \quad \dfrac{\int_{\Omega_{1}} N_{\Omega_{1}}\left(u\right)(x) u(x) \; dx}{\int_{\Omega_{1}} \left\vert u \right\vert^{2}(x) \; dx}.
\end{equation}
Exploiting the properties of the extension function, defined by $(\ref{Extension})$, we obtain 
\begin{equation*}
\int_{\Omega_{1}} N_{\Omega_{1}}\left(u\right)(x) u(x) \; dx = \int_{\Omega_{2}} N_{\Omega_{2}}\left(\tilde{u}\right)(x) \tilde{u}(x) \; dx \quad \text{and} \quad \int_{\Omega_{1}} \left\vert u \right\vert^{2}(x) \; dx = \int_{\Omega_{2}} \left\vert \tilde{u} \right\vert^{2}(x) \; dx.
\end{equation*}
Hence, $(\ref{Black})$ becomes, 
\begin{equation*}
\lambda_{k}\left( \Omega_{1} \right) \leq  \underset{\Xi_{k}}{Sup} \quad \underset{\tilde{u} \in \left( \Xi_{k} \cap \mathbb{L}^{2}(\Omega_{2}) \right)}{Inf} \quad \dfrac{\int_{\Omega_{2}} N_{\Omega_{2}}\left(\tilde{u}\right)(x) \tilde{u}(x) \; dx}{\int_{\Omega_{2}} \left\vert \tilde{u} \right\vert^{2}(x) \; dx} = \lambda_{k}\left( \Omega_{2} \right).
\end{equation*}
Finally, 
\begin{equation}\label{Monotonicity}
\text{if} \quad \Omega_{1} \subset \Omega_{2} \quad \text{we obtain} \quad \lambda_{k}\left( \Omega_{1} \right) \leq   \lambda_{k}\left( \Omega_{2} \right),
\end{equation}
and this ends the proof of the monotonicity property for the eigenvalues of the Newtonian potential operator. 
\item[]
\item At this stage, we use the proved monotonicity property, given by $(\ref{Monotonicity})$, by assuming the existence of two discs $B_{1} := B(z_{1},\rho_{1})$ and $B_{2} := B(z_{2},\rho_{2})$, where $z_{j} \in \Omega$ and  $\rho_{j} > 0$, for $j=1,2$, such that: 
\begin{equation*}
B_{1} \subset \Omega \subset B_{2} \quad \text{and} \quad \left\vert B_{j} \right\vert \sim a^{2}, \;\; j=1,2.
\end{equation*}
Thanks to $(\ref{Monotonicity})$ we derive, from the previous formula, the following relation, 
\begin{equation}\label{B1OmegaB2}
\lambda_{k}\left( B_{1} \right) \leq \lambda_{k}\left( \Omega \right) \leq \lambda_{k}\left( B_{2} \right).
\end{equation} 
\medskip
\newline
Now, by combining $(\ref{B1OmegaB2})$ with $(\ref{BehaviourEigVal})$, we derive the following behavior of the eigenvalues of the Newtonian potential operator, defined over an arbitrary shape $\Omega$.
\begin{equation}\label{EstimationEigOmega}
\lambda_{0}\left( \Omega \right) \, \sim \, a^{2} \, \left\vert \log(a) \right\vert  \quad \text{and} \quad \lambda_{k}\left( \Omega \right) \, \sim \, a^{2}, \quad \text{for} \quad k \geq 1.
\end{equation}
\end{enumerate}
\medskip
Regarding the estimation of the integral of the eigenfunction $f_{n}$, of the Newtonian potential operator defined over $\Omega$, i.e. 
\begin{equation}\label{AddedEqua}
N_{\Omega}\left( f_{n} \right) = \lambda_{n}(\Omega) \; f_{n},\, \quad \text{in} \quad \Omega,
\end{equation}
and as suggested by the behavior of the eigenvalues $\left\{ \lambda_{n}\left( \Omega \right) \right\}_{n \geq 0}$, i.e. $(\ref{EstimationEigOmega})$, we split the study into two cases.
\begin{enumerate}
\item[]
\item[i)] \label{EstimationFirstEigFct}
 For $n = 0$, we know from $(\ref{EstimationEigOmega})$, that $\lambda_{0}(\Omega) = \beta_{0} \, a^{2}\, \left\vert \log(a) \right\vert$, where $\beta_{0}$ is a positive constant independent on the parameter $a$. Moreover, after scaling the equation $(\ref{AddedEqua})$, from $\Omega$ to $\Omega^{\star}$, and taking the inner product with respect to $\tilde{f}_{n}$, we end up with 
\begin{equation*}
\left\langle \tilde{f}_{n}; N_{\Omega^{\star}}\left( \tilde{f}_{n} \right) \right\rangle_{\mathbb{L}^{2}(\Omega^{\star})} = \frac{\lambda_{n}(\Omega)}{a^{2}} \; \left\Vert \tilde{f}_{n} \right\Vert_{\mathbb{L}^{2}(\Omega^{\star})} + \frac{1}{2 \, \pi} \, \log(a) \, \left( \int_{\Omega^{\star}} \tilde{f}_{n} \right)^{2}. 
\end{equation*}
Next, we set $\overline{f}_{n} := \dfrac{ \tilde{f}_{n}}{\left\Vert \tilde{f}_{n} \right\Vert_{\mathbb{L}^{2}\left( \Omega^{\star} \right)}}$ and $\tilde{\lambda}_{n} := \left\langle \tilde{f}_{n}; N_{\Omega^{\star}}\left( \tilde{f}_{n} \right) \right\rangle_{\mathbb{L}^{2}(\Omega^{\star})}$ we obtain from the previous equation
\begin{equation}\label{Formulafrom2DWork}
\tilde{\lambda}_{n} = \frac{\lambda_{n}(\Omega)}{a^{2}} + \frac{1}{2 \, \pi} \, \log(a) \, \left( \int_{\Omega^{\star}} \overline{f}_{n}(x) \, dx \right)^{2},
\end{equation}
where $\Omega^{\star} = \dfrac{-z+\Omega}{a}, \, \left\vert \Omega^{\star} \right\vert \, \sim \, 1 $ and $\left\{ \tilde{\lambda}_{n} \right\}_{n \geq 0}$ is a positive and uniformly bounded sequence, i.e. $0 < \tilde{\lambda}_{n} \leq \underset{n}{Sup} \; \tilde{\lambda}_{n} < C^{te}$, where $C^{te}$ is a uniformly constant. Using the scale of $\lambda_{0}(\Omega)$ and the previous formula give us 
\begin{equation}\label{Equa1}
\tilde{\lambda}_{0} = \frac{\beta_{0} \, a^{2} \, \left\vert \log(a) \right\vert}{a^{2}} + \frac{1}{2 \, \pi} \, \log(a) \, \left( \int_{\Omega^{\star}} \overline{f}_{0}(x) \, dx \right)^{2},
\end{equation}
and this implies 
\begin{equation}\label{Equa2}
\left\vert \int_{\Omega^{\star}} \overline{f}_{0}(x) \, dx \right\vert = \sqrt{2 \, \pi \, \left( \beta_{0} - \frac{\tilde{\lambda}_{0}}{\left\vert \log(a) \right\vert} \right)} \;\;\sim \;\; 1.
\end{equation} 
\item[]
\item[ii)] For $n \geq 1$, we know from $(\ref{EstimationEigOmega})$, that $\lambda_{n}(\Omega) = \beta_{n} \, a^{2}\, $, where $\beta_{n}$ is a positive constant independent on the parameter $a$. Hence, using $(\ref{Formulafrom2DWork})$, we obtain
\begin{equation*}
\tilde{\lambda}_{n} = \frac{\beta_{n} \, a^{2} }{a^{2}} + \frac{1}{2 \, \pi} \, \log(a) \, \left( \int_{\Omega^{\star}} \overline{f}_{n}(x) \, dx \right)^{2},
\end{equation*}
and, knowing that $\tilde{\lambda}_{n} > 0$,  
\begin{equation*}
\left( \int_{\Omega^{\star}} \overline{f}_{n}(x) \, dx \right)^{2} \leq \beta_{n} \; 2 \, \pi \; \left\vert \log(a) \right\vert^{-1}.
\end{equation*}
This implies, 
\begin{equation*}
\left\vert \int_{\Omega^{\star}} \overline{f}_{n}(x) \, dx \right\vert  \;\; \lesssim \;\; \left\vert \log(a) \right\vert^{-\frac{1}{2}}.
\end{equation*}
Next, we show that the previous obtained estimation can be improved to be of order $\left\vert \log(a) \right\vert^{- 1}$, instead of $\left\vert \log(a) \right\vert^{-\frac{1}{2}}$. Now, after scaling the equation $(\ref{AddedEqua})$ and integrating again the obtained equation we obtain   
\begin{equation}\label{LBII}
\int_{\Omega^{\star}} \overline{f}_{n}(x) \, dx = \frac{\int_{\Omega^{\star}} N_{\Omega^{\star}}\left(1 \right)(x) \overline{f}_{n}(x) \, dx}{\left[  \frac{\lambda_{n}\left( \Omega \right)}{a^{2}} - \frac{1}{2 \, \pi} \, \left\vert \log(a) \right\vert \, \left\vert \Omega^{\star} \right\vert \right]},
\end{equation}
and knowing that $\lambda_{n}\left( \Omega \right) = \beta_{n} \, a^{2}$ we get
\begin{equation*}
\int_{\Omega^{\star}} \overline{f}_{n}(x) \, dx = \frac{\int_{\Omega^{\star}} N_{\Omega^{\star}}\left(1 \right)(x) \overline{f}_{n}(x) \, dx}{\left[ \beta_{n} - \frac{1}{2 \, \pi} \, \left\vert \log(a) \right\vert \, \left\vert \Omega^{\star} \right\vert \right]}.
\end{equation*}
By taking the modulus, in both sides, of the previous relation, using the fact that $\left\Vert N_{\Omega^{\star}} \right\Vert_{\mathcal{L}\left(\mathbb{L}^{2}(\Omega^{\star}) ; \mathbb{L}^{2}(\Omega^{\star})\right)} \; \sim \; 1$ and recalling that $\overline{f}_{n}(\cdot)$ are orthonormalized eigenfunctions in $\mathbb{L}^{2}(\Omega^{\star})$, we deduce  that 
\begin{equation*}
\left\vert \int_{\Omega^{\star}} \overline{f}_{n}(x) \, dx \right\vert \; \lesssim \;  \left\vert \log(a) \right\vert^{-1}. 
\end{equation*}

\item[]
\end{enumerate}

Finally, correspondingly to $(\ref{IranProtests})$ and for an arbitrary shape domain $\Omega$, we obtain after rescaling back to $\Omega$ the following behaviour of the integral of the eigenfunctions of the Newtonian potential operator with respect to the parameter $a$.
\begin{equation}\label{Iran'sWomen}
\int_{\Omega} f_{0}(x) \, dx \;\; \sim \;\; a \quad \text{and} \quad  \vert \int_{\Omega} f_{n}(x) \, dx \vert \;\; \lesssim \;\; a \, \left\vert \log(a) \right\vert^{-1}, \quad \text{for} \;\; n \geq 1.
\end{equation}
\end{enumerate}





\end{document}